\documentclass[11pt]{amsart}
\usepackage{mathtools}
\usepackage{amssymb}
\usepackage{amsmath}
\usepackage{amsthm}
\usepackage[margin=1in]{geometry}
\usepackage[utf8]{inputenc}
\vfuzz2pt 
\hfuzz2pt 
\newtheorem{theorem}{Theorem}
\newtheorem{corollary}[theorem]{Corollary}

\newtheorem{proposition}[theorem]{Proposition}
\theoremstyle{definition}

\theoremstyle{remark}


\newcommand{\RR}{\mathbb{R}}
\newcommand{\ZZ}{\mathbb{Z}}

\DeclareMathOperator{\ad}{ad} %
\DeclareMathOperator{\adstar}{ad^{\star}} %

\newcommand{\llangle}{\langle\!\langle}
\newcommand{\rrangle}{\rangle\!\rangle}

\newtheorem*{acknowledgement}{Acknowledgement}

\mathtoolsset{showonlyrefs}


\begin{document}

\title{Nonpositive Curvature of the quantomorphism group and quasigeostrophic motion}
\author[J. Lee]{Jae Min Lee}
\address{Department of Mathematics, KTH Royal Institute of Technology, 100 44 Stockholm, Sweden}
\email{lee9@kth.se}

\author[S.C. Preston]{Stephen C. Preston}
\address{Department of Mathematics, Brooklyn College and the Graduate Center, City University of New York, NY 11106, USA}
\email{stephen.preston@brooklyn.cuny.edu}

\subjclass[2010]{35Q35, 53D25}
\keywords{Quantomorphism group, Quasi-Geostrophic equation, Hasegawa-Mima equation, Nonpositive sectional curvature}
\date{\today}

\maketitle

\begin{abstract}
In this paper, we compute the sectional curvature of the quantomorphism group $\mathcal{D}_q(M)$ whose geodesic equation is the quasi-geostrophic (QG) equation in geophysics and oceanography, for flows with a stream function depending on only one variable. Using this explicit formula, we will also derive a criterion for the curvature operator to be nonpositive and discuss the role of the Froude number and the Rossby number on curvature. The main technique to obtain a usable formula is a simplification of Arnold's general formula in the case where a vector field is close to a Killing field, and then use the Green's function explicitly. We show that nonzero Froude number and Rossby numbers both tend to stabilize flows in the Lagrangian sense.
\end{abstract}

\section{Introduction}

There are two classical viewpoints on the motion of a fluid. First, the Eulerian perspective concerns $u(t,x)$, the velocity of a fluid particle located at the point $x$ at time $t$, and one studies the evolution equation of $u$ with the prescribed initial/boundary conditions. In the Lagrangian formalism, one considers the function $\eta(t,x)$, which is the position at time $t$ of a fluid particle which at time zero was at $x$. So one can think of the collection of $\eta(t,\cdot)$ as giving the configuration of the particles at each time $t$ and can recover the Eulerian description via $u(t,\cdot)=\eta_t \circ \eta^{-1}$. In the case of ideal fluid on a Riemannian manifold $M$, the configuration space is $\mathcal{D}_\mu(M)$, the group of volume preserving diffeomorphisms on $M$ where $\mu$ is the volume form on $M$. In his beautiful paper in 1966, Arnold \cite{A1966} observed that the Euler equation for ideal fluid can be realized as the geodesic equation on $\mathcal{D}_\mu(M)$ endowed with the right-invariant kinetic energy metric, and this observation was rigorously justified by Ebin and Marsden in 1970 \cite{EM1970}. Since then, geodesic equations on the diffeomorphism groups endowed with an invariant metric have been studied extensively. Invariance leads to a reduction of order to a first-order equation on the Lie algebra, which is called the Euler-Arnold equation.

The quasi-geostrophic equation (QG) describes large scale flows in atmosphere and ocean which have large horizontal to vertical aspect ratio. Here, quasi-geostrophy means that Coriolis force and horizontal pressure gradient forces are nearly in balance, which allows the momentum equation for the flow to be prognostic and include nonlinear dynamics. In terms of the stream function $\psi(t,x,y)$ of the velocity $u$ of the barotropic fluid, the QG equation in the $\beta$-plane approximation is given by
\begin{equation}\label{QGb}
\partial_t\left(\Delta \psi-\alpha^2 \psi\right)+\{\psi, \Delta \psi\}+\beta \psi_x=0,
\end{equation}
where $\alpha^2$ denotes the Froude number and $\beta$ is the Rossby number, the gradient for the Coriolis parameter. Here, $\{\cdot,\cdot\}$ is the Poisson bracket, i.e., $\{g,h\}=h_y g_x-g_x h_y$. The Coriolis parameter $f$ is approximated in the $\beta$-plane by $f=f_0+\beta y$ with constants $f_0$ and $\beta$. The case when $\beta=0$ is the $f$-plane approximation. The Froude number $\alpha^2$ is a nondimensionalized parameter defined by
$$\alpha:=\frac{u_0}{\sqrt{g_0 l_0}},$$
where $u_0$ is the velocity scale, $g_0$ is the gravitational constant, and $l_0$ is the horizontal length scale. So $\alpha$ measures the effect of gravity and $\alpha \ll 1$ in the mesoscale motions of the atmosphere and oceans in the midlatitudes. Additionally for $\alpha$ and $\beta$ both nonzero, equation \eqref{QGb} is the Hasegawa-Mima equation arising in plasma dynamics~\cite{ZP1994}.  The equation \eqref{QGb} can also be written in terms of the potential vorticity as
\begin{equation}\label{streamformulation}
\partial_t\omega+\{\psi,\omega\}=0,\;\;\;\;\;\omega=\Delta \psi-\alpha^2 \psi+\beta y,
\end{equation}
which is similar to the vorticity-stream formulation of the 2-dimensional incompressible Euler equation. The QG equation can be derived as the inviscid limit of the rotating shallow-water equations, as well. For more mathematical theory of atmospheric and oceanic fluid, see Majda \cite{M2003}. For more comprehensive background on the geostrophical fluid dynamics, see Pedlosky \cite{Ped2013}. It is important to note that equation \eqref{streamformulation} is \emph{not} the ``surface quasi-geostrophic'' (SQG) equation; the SQG equation is when $\omega = \sqrt{-\Delta} \psi$, and it has completely different properties. See \cite{BHP2019} and \cite{W2016} for the geometric approach to SQG, and references therein for other aspects.

From the geometric point of view, the QG equation is of interest since it is an example of the Euler-Arnold equation. In 1994, Zeitlin-Pasmanter \cite{ZP1994} showed that the QG equation can arise as the Euler-Arnold equation in the infinite dimensional Lie algebra and its central extension, without constructing the full group. They also computed the sectional curvature and showed that it is negative in the section spanned by the cosinusoidal stationary flows. In 1998, Holm-Zeitlin \cite{HZ1998} showed that the QG equation in the $f$- and $\beta$-plane approximations are the geodesic equations on the group of symplectic diffeomorphisms by using variational principles for QG dynamics.  Also, in 2008, Vizman \cite{V2008} showed that the equation \eqref{QGb} is the Euler-Arnold equation on the central extension of the group of Hamiltonian diffeomorphisms in the case when $\alpha=0$. Finally, Ebin-Preston \cite{EP2015} showed in 2015 that the QG equation is the geodesic equation on a central extension of the quantomorphism group (thus constructing the group corresponding to the Lie algebra in \cite{ZP1994}). 

On a contact manifold $(M, \theta)$, the quantomorphism group $\mathcal{D}_q(M)$ is defined as the space of diffeomorphisms on $M$ that preserve the contact form $\theta$ exactly. So the quantomorphisms group is a subgroup of the contactomorphism group $\mathcal{D}_\theta(M)$, whose elements preserve the contact structure, i.e., $\eta^*\theta = e^{\lambda}\theta$ for some $\lambda\colon M\to\mathbb{R}$. If the contact form is regular, then $\mathcal{D}_\theta(M)$ is related to a symplectic manifold by a Boothby-Wang fibration and the tangent space of $\mathcal{D}_q(M)$ can be identified with the space of functions $f\colon M \to \mathbb{R}$ such that $E(f)=0$, where $E$ is the Reeb field. Furthermore, one can show that $\mathcal{D}_q(M) \subset \mathcal{D}_\theta(M)$ is a totally geodesic submanifold. For more Riemannian geometry of the contactomorphism group in general, see Ebin-Preston \cite{EP2015}.

As in the finite dimensional Lie group case, the sectional curvature of the diffeomorphism group provides information about the stability of geodesics, which we call the Lagrangian stability. For example, positive curvature in all sections implies that geodesics with close initial data locally converge (stability) while negative sectional curvature implies that the geodesics spread apart (instability). Eulerian and Lagrangian stability are different but related: for example if a fluid is stable in the Eulerian sense, then the linearized Lagrangian perturbations can grow at most polynomially in time; see the second author's paper~\cite{P2004}. For more discussions on the curvature of the Euler-Arnold equations in general, see Khesin et al. \cite{KLMP2013}.

In this paper, we compute the sectional curvature $K(X,Y)$ of the quantomorphism group $\mathcal{D}_q(M)$ by the plane spanned by $X,Y \in T_{\mathrm{Id}}\mathcal{D}_q(M)$. Then from the explicit formula of the curvature, we will find a necessary and sufficient condition for the curvature operator $R_X:Y \mapsto K(X,Y)$ to be nonpositive. The explicit computation of the curvature formula is inspired by the work of the second author \cite{P2005} where the nonpositive curvature criterion for the area-preserving diffeomorphism group of a rotationally symmetric surface was presented. A similar computation was done for incompressible axisymmetric fluids in \cite{WP2017}.

The outline of the paper is following. In Section 2, we will review the Riemannian geometry of the quantomorphism group and sectional curvature formula. We will observe that the curvature formula simplifies significantly when one of the tangent vector is chosen to be a function of only the $y$-variable. Then in Section 3, we will compute the sectional curvature formula explicitly by using the Green's function directly and writing the curvature formula in terms of the combination of first integrals of known quantities. Then we will derive the nonpositive curvature criterion and discuss the role of the Froude number and the Rossby number on the curvature. Finally, Section 4 contains some conclusions and remarks.


\section{Riemannian geometry of quantomorphism group}

\subsection{The space of quantomorphisms}

Let $N$ be a 2-dimensional manifold with symplectic form $\omega$ (a nowhere-zero $2$-form). On top of $N$, there is a 3-dimensional manifold $M$ with a contact form $\theta$ such that $\theta\wedge d\theta$ is nowhere-zero, and a projection map
$\pi\colon M \to N$ satisfying $\pi^\ast \omega=d\theta$. Recall that for the contact form $\theta$, there is a unique vector field $E$, called the Reeb field, satisfying the two conditions $\theta(E)=1$ and $\iota_E d\theta=0$. Here for simplicity we will assume $N$ is the flat cylinder $N=\mathbb{S}^1\times [0,L]$ with $M = \mathbb{S}^1\times [0,L] \times\mathbb{S}^1$, where $\mathbb{S}^1 = \RR/2\pi\ZZ$, with $\theta = dz - y\,dx$ and $\omega = dx\wedge dy$. In this case, the Reeb field is $E=\partial_z$.

The space of quantomorphisms $\mathcal{D}_q(M)$ consists of diffeomorphisms $\eta$ on $M$ that preserve the contact form exactly, i.e., $\eta^\ast \theta=\theta$. Its tangent space at the identity consists of vector fields $X$ such that $\mathcal{L}_X \theta=0$, and such a vector field $X$ is uniquely determined by the function $\psi=\theta(X)$ via the formula $\iota_Xd\theta + d\psi=0$, and we write $X = S_{\theta}\psi$, following \cite{EP2015}. In the present case with $\theta = dz-y\,dx$, we have
\begin{equation}\label{Sthetadef}
X = S_{\theta}\psi = -\psi_y\,\partial_x + \psi_x \,\partial_y + (\psi + y\psi_y) \,\partial_z.
\end{equation}
This preserves the contact form iff $\psi_z=0$, and conversely any such function with $\psi_z=0$ gives a quantomorphism vector field.
That is, we can identify elements $X \in T_\mathrm{Id}\mathcal{D}_q(M)$ with $E$-invariant functions on $M$, which are identified with all functions on $N$.

\subsection{The Riemannian structure of $\mathcal{D}_q(M)$}
With the identification mentioned above, on the space of quantomorphisms $\mathcal{D}_q(M)$, we put a right-invariant metric which at the identity is given by
\begin{equation}
\llangle X, Y\rrangle=\int_N \alpha^2 \psi g+\langle \nabla \psi,\nabla g\rangle d\nu,\;\;\;\;X,Y \in T_\mathrm{Id}\mathcal{D}_q(M),
\end{equation}
for $X = S_{\theta}\psi$ and $Y = S_{\theta}g$,
where $\nu$ is the volume form on $N$, for a parameter $\alpha^2$ representing the Froude number.\footnote{In some geometries, such as on $S^3$ with its standard contact form, this metric is the $L^2$ metric on $X$; here the Euclidean metric is not compatible with the contact structure, and we prefer to use the Euclidean metric to obtain simpler equations.} The Lie algebra structure on $\mathcal{D}_q(M)$ is given by
\begin{equation}
\ad_X Y=-[S_{\theta}\psi,S_{\theta}g]=-S_{\theta}\{\psi,g\},
\end{equation}
where $X=S_\theta \psi$ and $Y=S_\theta g$ and $\{\psi,g\} = \psi_x g_y - \psi_y g_x$ is the usual Poisson bracket on $N$.
Then the Euler-Arnold equation on $\mathcal{D}_q(M)$ is
\begin{equation}
\left(\Delta-\alpha^2\right)\psi_t+\left\{\Delta \psi,\psi\right\}=0,
\end{equation}
which is the quasi-geostrophic equation in $f$-plane approximation on $N$; see \cite{EP2015}.

Now, we consider the central extension of $T_\mathrm{Id}\mathcal{D}_q(M)$ by $\mathbb{R}$ which is given by a cocycle of the form
$$b(\psi,g)=\int_N \chi\left\{\psi,g\right\}d\nu,$$
where $\chi$ is a fixed function, in this case given by $\chi(x,y)=y$. Then the new Lie algebra on $C^\infty(M) \times \mathbb{R}$ is given by
\begin{equation}
\ad_{\tilde{X}} \tilde{Y}=-\left[(\psi,\beta),(g,\gamma)\right]=-\left(S_{\theta}\{\psi,g\},b(\psi,g)\right),
\end{equation}
and the right-invariant metric is given at the identity by
\begin{equation}
\llangle \tilde{X},\tilde{Y} \rrangle=\llangle (\psi,\beta),(g,\gamma) \rrangle=\int_N \big(\alpha^2 \psi g+\langle \nabla \psi,\nabla g\rangle \big)\, d\nu+\beta \gamma,
\end{equation}
where $\tilde{X}=(\psi,\beta)$ and $\tilde{Y}=(g,\gamma)$. Then we can compute that
\begin{equation}
\adstar_{\tilde{X}} \tilde{Y}= \Big(S_{\theta} (\alpha^2 - \Delta)^{-1}\big( \alpha^2 \{\psi,g\} - \{\psi,\Delta g\} - \beta \{\psi,\chi\}\big), 0\Big),
\end{equation}
and the corresponding geodesic equation is
\begin{equation}
\beta_t=0,\;\;\;\;\left(\Delta-\alpha^2\right)\psi_t+\{\psi,\Delta \psi\}+\beta \psi_x=0,
\end{equation}
which is the equation \eqref{QGb}, the quasi-geostrophic equation in $\beta$-plane approximation. We can also write this equation in terms of the potential vorticity $\omega$ as following:
\begin{equation}
\omega_t+\{\psi,\omega\}=0,\;\;\;\;\omega=\psi_{xx}+\psi_{yy}-\alpha^2 \psi +\beta y.
\end{equation}

If we assume that $\psi$ is a function of only the $y$-variable, then $\psi$ is a steady solution since $\omega=\psi''(y)-\alpha^2 \psi(y)+\beta y$ and $\{\psi,\omega\}=0$.

\subsection{Sectional curvature formula}

Recall that Arnold's sectional curvature formula is
\begin{align}\label{arnold}
K(X,Y):=&\langle R(X,Y)Y,X\rangle\\
= &\tfrac{1}{4} \Big( \lvert \adstar_X Y+\adstar_Y X \rvert^2 +2 \langle \ad_X Y, \adstar_Y X - \adstar_X Y\rangle\\
&\hspace{1.3in}-3\lvert \ad_X Y\rvert^2 - 4\langle \adstar_X X, \adstar_Y Y\rangle\Big),
\end{align}
where $X,Y \in T_{\mathrm{Id}}\mathcal{D}_q(M)$ which are identified by $X=(\psi,\beta)$ and $Y=(g,\gamma)$ for functions $\psi,g:N \to \mathbb{R}$ and $\beta,\gamma \in \mathbb{R}$. From the assumption that $\psi$ is a function of only the $y$-variable, we can simplify the formula \eqref{arnold} in a nice form so that we can use the explicit computation technique suggested by the second author~\cite{P2005}.

Observe that
$$\adstar_X Y=\left(-\Lambda^{-1}(\psi'\Lambda g_x),0\right)$$
where $\Lambda=\alpha^2-\Delta$, and
$$-\ad_X Y=(-\psi'g_x,0),$$
which is very close to $\adstar_X Y$. In fact, these two are exactly the same when $\psi'' \equiv 0$. Define the following nonsymmetric commutator operator
$$D(X,Y):=\adstar_X Y+\ad_X Y.$$
Note that by right-invariance, the deformation tensor of $X$ is given by
\begin{equation*}
\mathrm{Def}\;X(Y,W)=\llangle \nabla_YX,W\rrangle + \llangle Y, \nabla_WX\rrangle = \llangle \ad_X Y+\adstar_X Y,W \rrangle.
\end{equation*}
Hence, we can conclude that the operator $D(X,Y):=\mathrm{Def}\;X(Y)$ satisfies the condition that $D(X,\cdot)=0$ if and only if $X$ is an isometry. For example, $\psi''(y)=0$ implies that $X$ is an isometry. So, in terms of this operator $D$, we can write
\begin{equation}\label{defor}
\adstar_X Y=-\ad_X Y+D(X,Y),
\end{equation}
and we have the following simplification of the Arnold curvature formula in the case when $D(X,Y)$ is simple.

\begin{proposition}
The Arnold curvature formula can be written in terms of the operator $D$ as following:
\begin{equation}\label{2termcurvature}
K(X,Y)=\frac{1}{4}\left\vert \adstar_Y X +D(X,Y) \right\vert^2-\llangle \ad_X Y,D(X,Y)\rrangle - \llangle D(X,X), D(Y,Y)\rrangle.
\end{equation}
\end{proposition}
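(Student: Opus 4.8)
The plan is to start from Arnold's formula \eqref{arnold} and substitute the defining relation \eqref{defor}, namely $\adstar_X Y = -\ad_X Y + D(X,Y)$, everywhere the adjoint-star terms appear, so as to re-express the four summands purely in terms of $\ad_X Y$, $\adstar_Y X$, $D(X,Y)$, and the ``self'' terms $D(X,X)$, $D(Y,Y)$. Concretely, I would first rewrite the symmetric combination $\adstar_X Y + \adstar_Y X$ as $\adstar_Y X - \ad_X Y + D(X,Y)$, and rewrite the difference $\adstar_Y X - \adstar_X Y$ as $\adstar_Y X + \ad_X Y - D(X,Y)$. The strategy is to let the abbreviation $a := \ad_X Y$, $s := \adstar_Y X$, $d := D(X,Y)$ carry the bookkeeping so that the three inner-product and norm-square terms in \eqref{arnold} become polynomials in $a$, $s$, $d$ under the inner product $\llangle\cdot,\cdot\rrangle$.

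Next I would expand each term. The term $\tfrac14\lvert \adstar_X Y + \adstar_Y X\rvert^2$ becomes $\tfrac14\lvert s - a + d\rvert^2$; the cross term $\tfrac12\langle \ad_X Y,\, \adstar_Y X - \adstar_X Y\rangle$ becomes $\tfrac12\langle a,\, s + a - d\rangle$; and the term $-\tfrac34\lvert \ad_X Y\rvert^2$ is just $-\tfrac34\lvert a\rvert^2$. I would expand the square $\lvert s - a + d\rvert^2 = \lvert s + d\rvert^2 - 2\langle a, s + d\rangle + \lvert a\rvert^2$ and collect all resulting inner products. The bet is that the $\lvert a\rvert^2$ contributions cancel: from $\tfrac14\lvert s-a+d\rvert^2$ we get $+\tfrac14\lvert a\rvert^2$, from the cross term $+\tfrac12\lvert a\rvert^2$, and the standalone $-\tfrac34\lvert a\rvert^2$, summing to zero; similarly the $\langle a, s\rangle$ and $\langle a, d\rangle$ cross terms should collapse so that what survives is exactly $\tfrac14\lvert \adstar_Y X + D(X,Y)\rvert^2 - \llangle \ad_X Y, D(X,Y)\rrangle$, matching the first two terms of \eqref{2termcurvature}.

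For the final summand, the remaining piece of Arnold's formula is $-\langle \adstar_X X, \adstar_Y Y\rangle$. Here I would apply \eqref{defor} in its ``self'' forms $\adstar_X X = -\ad_X X + D(X,X) = D(X,X)$ and $\adstar_Y Y = D(Y,Y)$, using that $\ad_X X = -S_\theta\{\psi,\psi\} = 0$ and likewise $\ad_Y Y = 0$ since the Poisson bracket of a function with itself vanishes. This immediately turns $-\langle \adstar_X X, \adstar_Y Y\rangle$ into $-\llangle D(X,X), D(Y,Y)\rrangle$, which is the last term of \eqref{2termcurvature}.

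The main obstacle I anticipate is purely bookkeeping: the cross-term cancellations in the second paragraph must be verified carefully, since the inner product is symmetric but $D$ is \emph{not} symmetric in its two arguments, so one must be vigilant not to silently identify $\langle a, s\rangle$ with a differently-ordered quantity or to assume $D(X,Y) = D(Y,X)$. The substitutions themselves are algebraically routine, so the real work is confirming that every term involving $\lvert a\rvert^2$, $\langle a,s\rangle$, and $\langle a,d\rangle$ indeed telescopes to leave precisely the two advertised terms; I would organize this as a short sequence of line-by-line identities rather than one large expansion to keep the sign tracking transparent.
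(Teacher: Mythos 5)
Your proposal is correct and is essentially the paper's own proof: a direct substitution of $\adstar_X Y=-\ad_X Y+D(X,Y)$ into Arnold's formula, expansion of the squared norm, verification that the $\lvert \ad_X Y\rvert^2$ and $\langle \ad_X Y,\adstar_Y X\rangle$ contributions cancel while the $\langle \ad_X Y, D(X,Y)\rangle$ terms combine with coefficient $-1$, and the identification $\adstar_X X=D(X,X)$, $\adstar_Y Y=D(Y,Y)$ via $\ad_X X=0$ (which the paper uses implicitly and you spell out). Your stated cancellation coefficients all check out, so the bookkeeping you flagged as the main risk goes through exactly as you predicted.
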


\begin{proof}
By substituting the equation \eqref{defor} and expanding, we get
\begin{align*}
\langle R(X,Y)Y,X \rangle=&\frac{1}{4}\lvert \adstar_Y X\!-\!\ad_X Y\!+\!D(X,Y)\rvert^2\!+\frac{1}{2}\langle \ad_X Y, \adstar_Y X\!+\!\ad_X Y-D(X,Y)\rangle\! \\
&\qquad\qquad -\frac{3}{4}\lvert \ad_X Y\rvert^2 - \langle D(X,X), D(Y,Y)\rangle \\
=&\frac{1}{4}\lvert \adstar_Y X\rvert^2+\frac{1}{4}\lvert \ad_X Y \rvert^2+\frac{1}{4}\lvert D(X,Y) \rvert^2-\frac{1}{2}\langle \adstar_Y X,\ad_X Y\rangle  \\
&\;\;\;\;- \langle D(X,X), D(Y,Y)\rangle+\frac{1}{2}\langle \adstar_Y X,D(X,Y)\rangle-\frac{1}{2}\langle \ad_X Y,D(X,Y)\rangle\\
&\;\;\;\;+\frac{1}{2}\langle \ad_X Y,\adstar_Y X \rangle+\frac{1}{2}\lvert \ad_X Y \rvert^2-\frac{1}{2}\langle \ad_X Y, D(X,Y) \rangle-\frac{3}{4}\lvert \ad_X Y \rvert^2\\
=&\frac{1}{4}\lvert \adstar_Y X+D(X,Y) \rvert^2-\langle \ad_X Y, D(X,Y) \rangle - \langle D(X,X), D(Y,Y)\rangle.
\end{align*}
\end{proof}

Note that if $X$ is a steady solution of the Euler equation, then $\adstar_XX=D(X,X)=0$, and the last term disappears.

We will compute the sectional curvature in the case when $\psi=\psi(y)$ generates a steady solution, and in this case,
\begin{align*}
\adstar_Y X+D(X,Y)=&\left(\Lambda^{-1}\left(-2\partial_y(\psi''g_x)+(\alpha^2 \psi'-\beta)g_x\right),0\right),\\
\llangle \ad_X Y,D(X,Y) \rrangle=&\int_N (\psi''g_x)^2 d\nu,
\end{align*}
and finally the curvature formula becomes
\begin{equation}\label{curvature}
K(X,Y)=\int_N \left(\partial_y(\psi''g_x)-\frac{1}{2}(\alpha^2 \psi'-\beta)g_x\right)\Lambda^{-1}\left(\partial_y(\psi''g_x)-\frac{1}{2}(\alpha^2 \psi'-\beta)g_x\right) d\nu-\int_N (\psi''g_x)^2 d\nu.
\end{equation}
We can see that if $\alpha=\beta=0$, then the curvature formula reduces to
\begin{align*}
K(X,Y)&=\int_N \left(\partial_y(\psi''g_x)\right)(\partial_x^2+\partial_y^2)^{-1}\left(\partial_y(\psi''g_x)\right)\, d\nu-\int_N (\psi''g_x)^2 \,d\nu \\
&= \int_N (\psi''g_x)_x (\partial_x^2+\partial_y^2)^{-1} (\psi''g_x)_x \, d\nu \le 0,
\end{align*}
which reproduces the nonpositive sectional curvature of the 2-dimensional area preserving diffeomorphism group case from \cite{P2005}.

\section{Explicit curvature formula and Nonpositive criterion}

\subsection{Green's function}

To proceed with the explicit computation of the formula \eqref{curvature}, we compute the Greens function for $\Lambda^{-1}$ explicitly. We will expand the function $g$ in terms of the Fourier series in $x$ variables of the form
$$g(x,y)=\sum_{n \in \mathbb{Z}}g_n(y)e^{inx}.$$
In our domain, the stream function must be \emph{constant} on the boundary segments $y=0$ and $y=L$, and thus when $n\ne 0$ we must have $g_n(0)=g_n(L)=0$.
Then the boundary value problem associated with the Green's function for $\Lambda^{-1}$ reduces to an ODE for the functions in $y$ variables. We obtain the following BVP for $u(y) = G(y,s)$:
$$\left\{
  \begin{array}{l l}
      -u''(y)+\lambda^2 u(y)=\delta(y-s)\\
      u(0)=0=u(L)
   \end{array} \right.$$
whose explicit solution is given by
\begin{equation}\label{greenfunction}
G(y,s) = \frac{1}{\lambda \sinh{L\lambda}} \begin{cases}
\sinh{\lambda(L -s)} \sinh{\lambda y} & 0\le y\le s, \\
\sinh{\lambda(L -y)} \sinh{\lambda s} & s\le y\le L.
\end{cases}
\end{equation}
where $\lambda^2=\alpha^2+n^2$.

\subsection{Explicit computation of the curvature}

Now, we want to compute the sectional curvature formula \eqref{curvature} explicitly using the Green's function. By first substituting the Fourier expansion of $g$, we have
\begin{equation}\label{fouriersum}
K(X,Y) = \sum_{n\in\ZZ} n^2 K_n,
\end{equation}
where
\begin{equation}\label{curv2}
K_n = \frac{1}{4} \int_0^{L}\overline{\phi_n(y)}\Big(\lambda^2-\frac{d^2}{dy^2}\Big) \phi_n(y) \, dy-\int_0^{L} q(y)^2 \lvert g_n(y)\rvert^2 \, dy,
\end{equation}
where
\begin{equation}\label{pqdef}
p(y) = \alpha^2 \psi'(y) - \beta, \qquad q(y) = \psi''(y),
\end{equation}
and
\begin{equation}\label{psidef}
\lambda^2 \phi_n(y) - \phi_n''(y) = p(y)g_n(y) - 2 \, \frac{d}{dy} \big(q(y) g_n(y)\big), \qquad \phi_n(0)=\phi_n(L)=0.
\end{equation}

\begin{proposition}\label{Kncompyprop}
For any function $g_n\colon [0,L]\to \mathbb{C}$, the $n^{\text{th}}$ term $K_n$ in the curvature \eqref{curv2} is given by
\begin{equation}\label{curv3}
K_n = \frac{2}{\lambda \sinh{L \lambda}} \int_0^L \int_0^y \xi(y) \eta(z) \mathrm{Re}{\big( \overline{g_n(z)} g_n(y) \big)} \, dz \, dy,
\end{equation}
where $\lambda^2 = \alpha^2 + n^2$, $p(y) = \alpha^2 \psi'(y) - \beta$, $q(y) = \psi''(y)$, and
\begin{align}
\eta(y) &= \tfrac{1}{2} p(y) \sinh{\lambda y} + \lambda q(y) \cosh{\lambda y} \label{etadef} \\
\xi(y) &= \tfrac{1}{2} p(y) \sinh{\lambda(L-y)} - \lambda q(y) \cosh{\lambda(L-y)}. \label{xidef}
\end{align}
\end{proposition}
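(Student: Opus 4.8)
The plan is to represent $\phi_n$ through the Green's function \eqref{greenfunction}, recognize $\eta$ and $\xi$ as its factorized source-side kernels, and then collapse $K_n$ to a single iterated integral. First I would write $\phi_n(y)=\int_0^L G(y,s)F(s)\,ds$, where $F(s)=p(s)g_n(s)-2\frac{d}{ds}\big(q(s)g_n(s)\big)$ is the right-hand side of \eqref{psidef}. Integrating the derivative term by parts in $s$ and using $g_n(0)=g_n(L)=0$ to kill the boundary contribution moves one derivative onto $G$:
\[
\phi_n(y) = \int_0^L \big(G(y,s)\,p(s) + 2\partial_s G(y,s)\,q(s)\big)\,g_n(s)\,ds.
\]
The crucial observation is that both $G(y,s)$ and $\partial_s G(y,s)$ factor into (function of $y$)$\times$(function of $s$) separately on $\{s<y\}$ and $\{s>y\}$. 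Computing $\partial_s G$ from \eqref{greenfunction} and collecting the $s$-dependent factors, the bracket equals $\frac{2\sinh\lambda(L-y)}{\lambda\sinh L\lambda}\eta(s)$ on $\{s<y\}$ and $\frac{2\sinh\lambda y}{\lambda\sinh L\lambda}\xi(s)$ on $\{s>y\}$, with $\eta,\xi$ exactly as in \eqref{etadef}--\eqref{xidef}. Hence
\[
\phi_n(y) = \frac{2}{\lambda\sinh L\lambda}\Big(\sinh\lambda(L-y)\,A(y) + \sinh\lambda y\,B(y)\Big), \quad A(y)=\int_0^y \eta\, g_n,\ \ B(y)=\int_y^L \xi\, g_n.
\]

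Next I would differentiate. The terms where the derivative hits the variable endpoints of $A$ and $B$ produce $g_n(y)\big(\sinh\lambda(L-y)\,\eta(y)-\sinh\lambda y\,\xi(y)\big)$, and the hyperbolic addition identity $\sinh(\lambda(L-y)+\lambda y)=\sinh\lambda L$ collapses this to $\lambda q(y)g_n(y)\sinh\lambda L$, giving
\[
\phi_n'(y) = \frac{2}{\sinh L\lambda}\big(-\cosh\lambda(L-y)\,A(y) + \cosh\lambda y\,B(y)\big) + 2q(y)g_n(y).
\]
In the first integral of \eqref{curv2} I use $(\lambda^2-\frac{d^2}{dy^2})\phi_n=F$ and integrate the $(qg_n)'$ part of $F$ by parts once (boundary terms vanish since $\phi_n(0)=\phi_n(L)=0$) to obtain $\tfrac14\int\overline{\phi_n}\,p g_n + \tfrac12\int\overline{\phi_n'}\,q g_n$. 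The piece $2qg_n$ of $\phi_n'$ contributes exactly $\int_0^L q^2\abs{g_n}^2$, which cancels the second term of \eqref{curv2}; this cancellation is what makes the final formula clean.

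After cancellation I substitute the explicit $\phi_n$ and $\phi_n'$ and group the coefficients of $\overline{A}$ and $\overline{B}$: the coefficient of $\overline{A(y)}\,g_n(y)$ reassembles into $\xi(y)$ and that of $\overline{B(y)}\,g_n(y)$ into $\eta(y)$, leaving
\[
K_n = \frac{1}{\lambda\sinh L\lambda}\int_0^L \big(\xi(y)\overline{A(y)} + \eta(y)\overline{B(y)}\big)\,g_n(y)\,dy.
\]
Finally I rewrite the two summands as iterated integrals over $\{z<y\}$ and $\{z>y\}$; relabeling the variables in the $\eta\overline{B}$ term shows it equals the complex conjugate of the $\xi\overline{A}$ term. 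Since $K_n$ is real, the two contributions add to $2\,\mathrm{Re}$ of the single iterated integral over $\{z<y\}$, which is precisely \eqref{curv3}.

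I expect the main obstacle to be the bookkeeping in the first two steps: computing the piecewise $\partial_s G$ correctly, verifying that the two region-dependent $s$-kernels are exactly $2\eta$ and $2\xi$, and confirming via the $\sinh$ addition identity that the endpoint terms in $\phi_n'$ produce exactly $2qg_n$, so that the $q^2$ terms cancel and the emergent coefficients are precisely $\xi$ and $\eta$.
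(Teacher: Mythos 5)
Your proposal is correct and follows essentially the same route as the paper's proof: the Green's-function representation of $\phi_n$, an integration by parts that moves the derivative off $q\,g_n$ and produces exactly the kernels $\eta$ and $\xi$, the derivative formula whose endpoint terms collapse via the $\sinh$ addition identity to $2q(y)g_n(y)$ (cancelling the $-\int q^2 \lvert g_n\rvert^2$ term), and the final symmetrization over $\{z<y\}$ and $\{z>y\}$. The only cosmetic difference is that you kill the first boundary term using $g_n(0)=g_n(L)=0$, whereas the Green's function conditions $G(y,0)=G(y,L)=0$ already make it vanish for arbitrary $g_n$, as the proposition's statement requires.
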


\begin{proof}
From the formula \eqref{greenfunction} and \eqref{psidef}, we have
\begin{align*}
\phi_n(y) &= \frac{1}{\lambda \sinh{\lambda L}} \int_0^y \sinh{\lambda z} \sinh{\lambda (L-y)} \Big( p(z)g_n(z) - 2\tfrac{d}{dz} \big( q(z) g_n(z)\big) \Big)\, dz \\
&\qquad\qquad + \frac{1}{\lambda \sinh{\lambda L}} \int_y^L \sinh{\lambda y} \sinh{\lambda (L-z)} \Big( p(z)g_n(z) - 2\tfrac{d}{dz} \big(q(z) g_n(z)\big)\Big) \, dz.
\end{align*}
Integrate by parts to remove the derivative on $\frac{d}{dz}\big( q(z)g_n(z)\big)$, and we obtain (after vanishing of the boundary term)
\begin{equation}\label{psisimplified}
\begin{split}
\phi_n(y) &= \frac{1}{\lambda \sinh{\lambda L}} \int_0^y \sinh{\lambda(L-y)} \Big( \sinh{\lambda z}  p(z) + 2 \lambda q(z) \cosh{\lambda z}\Big) g_n(z) \, dz \\
&\qquad\qquad + \frac{1}{\lambda \sinh{\lambda L}} \int_y^L \sinh{\lambda y} \Big( \sinh{\lambda(L-z)} p(z) - 2 \lambda q(z) \cosh{\lambda(L-z)} \Big) g_n(z) \, dz \\
&= \frac{2}{\lambda \sinh{\lambda L}} \int_0^y \sinh{\lambda(L-y)} \, \eta(z) g_n(z) \, dz + \frac{2}{\lambda \sinh{\lambda L}} \int_y^L \sinh{\lambda y} \, \xi(z) g_n(z)\, dz.
\end{split}
\end{equation}
The derivative is easily computed to be
\begin{multline}\label{psiderivative}
\phi_n'(y) = 2q(y) g_n(y) - \frac{2}{\sinh{\lambda L}} \int_0^y \cosh{\lambda(L-y)} \,\eta(z) g_n(z) \, dz \\
 + \frac{2}{\sinh{\lambda L}} \int_y^L \cosh{\lambda y} \,\xi(z)  g_n(z)\,dz.
\end{multline}

Now using \eqref{psidef} in \eqref{curv2} and integrating by parts we get
$$
K_n = \frac{1}{4} \int_0^{L}\overline{\phi_n(y)} p(y)g_n(y) \, dy + \frac{1}{2} \int_0^L \overline{\phi_n'(y)} q(y) g_n(y) \, dy
-\int_0^{L} q(y)^2 \lvert g_n(y)\rvert^2 \, dy.
$$
Inserting the expressions \eqref{psisimplified} and \eqref{psiderivative} into this, and recalling the definitions of $\eta$ and $\xi$ from \eqref{etadef}--\eqref{xidef}, we obtain
$$ K_n 
= \frac{1}{\lambda \sinh{\lambda L}} \int_0^L \int_0^y \xi(y) \eta(z) \overline{g_n(z)} g_n(y) \, dz\, dy  + \frac{1}{\lambda \sinh{\lambda L}}  \int_0^L \int_y^L \eta(y) \xi(z) \overline{g_n(z)} g_n(y) \, dz \, dy.$$
Finally interchanging the order of integration and switching $y$ and $z$ in the second integral turns this into \eqref{curv3}.
\end{proof}
%

To proceed further, we observe the following fact.

\begin{theorem}\label{generalintegral}
Suppose $\eta,\xi\colon [0,L]\to \mathbb{R}$ are given functions, and that the function $R(y)=\xi(y)/\eta(y)$ is meromorphic. Then the bilinear form
\begin{equation}\label{bilinearform}
g\mapsto B(g,g) := 2\int_0^L \int_0^y  \xi(y) \eta(z) \mathrm{Re} \big(\overline{g(z)} g(y)\big) \,dz \, dy
\end{equation}
is nonpositive for all $g\colon [0,L]\to \mathbb{C}$ if and only if $R$ is nowhere zero or infinite, and the function $R$ is increasing and nonpositive on $[0,L]$.
\end{theorem}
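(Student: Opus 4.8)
The plan is to collapse the double integral into a single one‑dimensional variational expression and read the sign conditions off from it. First I would remove the complex structure: writing $g = u + iv$ with $u,v$ real gives $\mathrm{Re}(\overline{g(z)}g(y)) = u(z)u(y) + v(z)v(y)$, so that $B(g,g) = B(u,u) + B(v,v)$. Hence $B$ is nonpositive on all complex $g$ exactly when it is nonpositive on all real $g$, and from here I assume $g$ real.

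The heart of the argument is the substitution $H(y) := \int_0^y \eta(z)g(z)\,dz$, which satisfies $H(0)=0$ and $H' = \eta g$. Then $B(g,g) = 2\int_0^L \xi(y)g(y)H(y)\,dy$, and using $\xi g = R\,\eta g = R\,H'$ — this is exactly where I need $\eta$ nowhere zero, i.e.\ $R$ nowhere infinite — I can write $B(g,g) = \int_0^L R(y)\frac{d}{dy}\bigl(H(y)^2\bigr)\,dy$ and integrate by parts using $H(0)=0$ to obtain the master formula
\[ B(g,g) = R(L)\,H(L)^2 - \int_0^L R'(y)\,H(y)^2\,dy. \]
Because $\eta$ is continuous and nonvanishing on the compact interval, $g \mapsto H$ is a bijection onto the absolutely continuous functions with $H(0)=0$, so controlling the sign of $B$ is equivalent to controlling the right‑hand side over all such $H$.

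Sufficiency is then immediate: if $R$ is nonpositive and nondecreasing, the boundary term is $\le 0$ since $R(L)\le 0$, and the integral term is $\le 0$ since $R'\ge 0$. For necessity I would use localized test functions. Taking $H$ a narrow bump supported in $(0,L)$ and concentrated near an interior point $y_0$, so $H(L)=0$, forces $R'(y_0)\ge 0$; hence $R$ is nondecreasing. Taking $H$ that vanishes except on a shrinking interval $(L-\varepsilon,L)$ where it rises to $1$ drives the integral term to $0$ while keeping $H(L)=1$, forcing $R(L)\le 0$; combined with monotonicity this gives $R\le 0$ throughout. Since $R$ is meromorphic, hence real‑analytic once it is known to have no poles, $R'$ is continuous and these one‑sided conclusions extend to the closed interval; moreover an analytic nonpositive nondecreasing function is either identically zero or strictly negative on $[0,L)$, which is what pins down the nonvanishing statement. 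The boundary term shows that a zero of $R$ precisely at $y=L$ is compatible with nonpositivity, so I would phrase the nowhere‑zero conclusion with care on the open interval.

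The hard part will be establishing necessity of the nowhere‑infinite hypothesis, i.e.\ that $\eta$ cannot vanish: this is precisely the hypothesis that makes the substitution a bijection and the master formula valid, so it must be proved rather than assumed. I would show that a zero $y_1$ of $\eta$ with $\xi(y_1)\ne 0$ (a genuine pole of $R$) destroys nonpositivity by testing against an explicit box function centered at $y_1$. A direct computation — with $\eta(y)\approx \eta'(y_1)(y-y_1)$ and $\xi\approx\xi(y_1)$, a symmetric box yields a leading contribution of one definite sign while an antisymmetric two‑box profile yields the opposite sign — shows that whatever the sign of $\xi(y_1)\eta'(y_1)$, one of these profiles makes $B(g,g)>0$. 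Tracking these leading terms and confirming that the sign can always be forced positive is the most delicate step, and is where the meromorphic hypothesis does real work by controlling the order of the singularity. A final point of care is the degenerate case where $\eta$ and $\xi$ share a zero, so $R$ stays finite even though $g = H'/\eta$ breaks down; there I would argue by a limiting/approximation argument, or treat the (non‑generic, given the explicit analytic forms of $\eta,\xi$) coincidence directly.
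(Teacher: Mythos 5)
Your proposal is correct and, for most of its length, coincides with the paper's own proof: the substitution $H(y)=\int_0^y \eta(z)g(z)\,dz$, the master formula $B(g,g)=R(L)\lvert H(L)\rvert^2-\int_0^L R'(y)\lvert H(y)\rvert^2\,dy$, the immediate sufficiency, and the localized bumps forcing $R'\ge 0$ and $R(L)\le 0$ are exactly the paper's steps (the reduction to real $g$ is implicit there). Where you genuinely diverge is on the step you rightly flag as the hard one, excluding poles of $R$ (zeros of $\eta$): you propose local asymptotics against box-shaped test profiles, which works for a simple zero (a symmetric box centered at $y_1$ gives leading term $-\tfrac{4}{3}\xi(y_1)\eta'(y_1)\delta^3$, and an antisymmetric pair flips the sign) but must be redone order by order for higher-order zeros. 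The paper avoids this bookkeeping with a duality trick worth knowing: reversing the order of integration exchanges the roles of $\eta$ and $\xi$, so on a neighborhood $(a,b)$ of the pole, setting $J(z)=\int_z^b \xi(y)g(y)\,dy$ and $\zeta=1/R=\eta/\xi$ yields the mirror identity $B(g,g)=\zeta(a)\lvert J(a)\rvert^2+\int_a^b \zeta'(z)\lvert J(z)\rvert^2\,dz$; since $\zeta(y_0)=0$ and $\zeta$ is analytic near $y_0$, a two-case analysis on the sign of $\zeta(a)$ (exploit the boundary term if $\zeta(a)>0$, a point with $\zeta'>0$ in $(a,y_0)$ if $\zeta(a)<0$) produces positivity uniformly in the order of the pole. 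Two of your side remarks are improvements rather than gaps: your derivation that zeros of $R$ are impossible via analyticity plus monotonicity (a nonpositive, nondecreasing, analytic $R$ is identically zero or strictly negative on $[0,L)$) is more explicit than the paper's unelaborated ``similar argument,'' and your observation that a zero of $R$ exactly at $y=L$ is compatible with nonpositivity (e.g.\ $\eta\equiv 1$, $\xi(y)=y-L$) correctly identifies an endpoint imprecision in the theorem's literal ``nowhere zero'' claim. Finally, your worry about common zeros of $\eta$ and $\xi$ needs no limiting argument: the identity $\xi g=R\,H'$ extends by continuity across a removable singularity, and the bump functions $H$ required for necessity can be realized as plateaus with $g$ supported away from the isolated zero of $\eta$.
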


\begin{proof}
Suppose $R(y)$ is well-defined on $[0,L]$. Let $H(y) = \int_0^y \eta(z) g(z)\,dz$. Then we have
\begin{equation}\label{bilinearg}
\begin{split}
B(g,g) &= \int_0^L R(y) \overline{H'(y)} H(y) + R(y) H'(y) \overline{H(y)} \, dy \\
&= \int_0^L R(y) \frac{d}{dy} \lvert H(y)\rvert^2 \, dy \\
&= R(L) H(L)^2 - \int_0^L R'(y) \lvert H(y)\rvert^2 \, dy.
\end{split}
\end{equation}
If $R(L)\le 0$ and $R'(y)\ge 0$, then $B(g,g)\le 0$ for every $g$.\\
Conversely, suppose that $B(g,g) \le 0$ for every $g$. We first claim that the function $R$ cannot have any singularity in $[0,L]$. If $R$ is singular at some point $y_0$, then $\zeta = 1/R = \eta/\xi$ has a zero at this point since $R$ is meromorphic. Consider only functions $g$ with support in a small neighborhood $U=(a,b)$ of $y_0$ where $\zeta$ has no zeroes in $[a,b]$ other than $y_0$. The integral \eqref{bilinearform} can be rewritten after a change of integration order as
$$
B(g,g) = 2\int_0^L \int_z^L \xi(y) \eta(z) \mathrm{Re}{\big( \overline{g(z)} g(y)\big)} \, dy \, dz
= 2\int_a^b \int_z^b \xi(y)\eta(z) \mathrm{Re}{\big(\overline{g(z)} g(y)\big)}  \,dy \, dz.$$
Now set $J(z) = \int_z^b \xi(y) g(y)\,dz$. Then
$$ B(g,g) = -\int_a^b \zeta(z) \,\frac{d}{dz}\lvert J(z)\rvert^2 \, dz
= \zeta(a) \lvert J(a)\rvert^2 + \int_a^b \zeta'(z) \lvert J(z)\rvert^2 \, dz.  $$
We know $\zeta(y_0)=0$ for a unique $y_0\in (a,b)$, and we consider the sign of $\zeta(a)$, since by assumption $\zeta(a)\ne 0$.
\begin{itemize}
\item If $\zeta(a)>0$ then we may clearly choose $g$ so that $\lvert J(a)\rvert$ is large compared to $\lVert J\rVert_{L^2(a,b)}$ and obtain positivity of $B(g,g)$.
\item If $\zeta(a)<0$ then $\zeta'$ must be positive at some  $c_0\in (a,y_0)$, and we may choose $g$ so that $J$ is supported in a small neighborhood of $c_0$ and again obtain positivity of $B(g,g)$.
\end{itemize}
Thus if $B(g,g)\le 0$ for every $g$, then $R$ cannot have a pole. By using a similar argument, we can show that the function $R$ cannot have a zero.

Lastly, we claim that the function $R$ is increasing and nonpositive on $[0,L]$.
If there is any point $y_0 \in (0,L)$ with $R'(y_0)<0$, in a small neighborhood of $y_0$ we can choose $H$ nonzero in this neighborhood and zero outside, and obtain a contradiction in the nonpostivity of \eqref{bilinearg}. Hence we must have $R'(y)\ge 0$ everywhere in $[0,L]$ by continuity. Meanwhile if $R(L)>0$, then we can choose $H$ such that $\lvert H(L)\rvert$ is large but $\lVert H\rVert_{L^2}$ is small on $[0,L]$, and again obtain a contradiction. This completes the proof of the converse.
\end{proof}

%
%
%
%
%

We now apply Theorem \ref{generalintegral} to the formula \eqref{curv3}.

\begin{proposition}\label{curvaturetheorem}
Suppose $f$ is analytic and $p(y) = \alpha^2 \psi'(y) - \beta$ and $q(y)=\psi''(y)$. For $n\in \mathbb{N}$, the $n^{\text{th}}$ term $K_n$ in the sectional curvature given by \eqref{curv3} is nonnegative for all $g_n\colon [0,L]\to \mathbb{C}$ if and only if $\eta$ and $\xi$ given by \eqref{etadef}--\eqref{xidef} have no isolated zeroes in $(0,L)$ and
\begin{align}
\alpha^2 p(y)^2 + 2 \alpha^2 p(y) p''(y) - (6 \alpha^2 + 4n^2) p'(y)^2 &\le 0  \label{pcritDE} \\
\frac{2\lambda p'(L)}{\alpha^2 p(L) \sinh{(\lambda L)} + 2\lambda \cosh{(\lambda L)} \, p'(L)} &\ge 0.\label{pcritBC}
\end{align}
\end{proposition}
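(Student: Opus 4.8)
The plan is to read off that the double integral defining $K_n$ in \eqref{curv3} is, up to the positive constant $1/(\lambda \sinh L\lambda)$ — here $\lambda=\sqrt{\alpha^2+n^2}>0$ and $\sinh L\lambda>0$ for $n\ge 1$ — exactly the bilinear form $B(g_n,g_n)$ of Theorem \ref{generalintegral} with $\xi,\eta$ as in \eqref{etadef}--\eqref{xidef}. Consequently the sign of $K_n$ over all $g_n$ is controlled entirely by $B$, and the definiteness of $K_n$ reduces verbatim to the hypotheses of that theorem applied to the ratio $R=\xi/\eta$. First I would check the structural hypotheses: since the stream function $\psi$ (hence $p$ and $q$) is analytic, $\xi$ and $\eta$ are analytic and $R$ is meromorphic, so Theorem \ref{generalintegral} applies; the requirement that $R$ have neither a zero nor a pole on $[0,L]$ is precisely the stated condition that $\eta$ and $\xi$ have no isolated zeroes in $(0,L)$. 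The two remaining hypotheses of the theorem — monotonicity of $R$ and the sign of its endpoint value — are what must be converted into \eqref{pcritDE} and \eqref{pcritBC} respectively.

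The monotonicity is governed by the sign of $R'=(\xi'\eta-\xi\eta')/\eta^2$, i.e.\ by the Wronskian $W=\xi'\eta-\xi\eta'$. The key computation is to expand $W$ in terms of the products of $\sinh$ and $\cosh$ with arguments $\lambda y$ and $\lambda(L-y)$. Because these arguments sum to the \emph{constant} $\lambda L$, the hyperbolic addition formulas force every genuinely $y$-dependent cross term (those involving $\cosh\lambda(2y-L)$ and $\sinh\lambda(2y-L)$) to cancel: concretely, the coefficients of $s_1s_2$ and of $c_1c_2$ vanish identically, while the coefficients of $s_1c_2$ and of $c_1s_2$ coincide, so $W$ collapses to $\sinh(L\lambda)$ times a single local factor in $p,q,q'$. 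Substituting $q=\psi''=p'/\alpha^2$ and $q'=p''/\alpha^2$ and using $\lambda^2=\alpha^2+n^2$ rewrites that factor as a positive multiple of the left-hand side of \eqref{pcritDE}, so the monotonicity of $R$ becomes exactly the pointwise differential inequality \eqref{pcritDE}. For the boundary term I would evaluate $R$ at $y=L$ using $\sinh 0=0$ and $\cosh 0=1$, obtaining
\[
\xi(L)=-\lambda q(L), \qquad \eta(L)=\tfrac12 p(L)\sinh L\lambda+\lambda q(L)\cosh L\lambda ,
\]
and then clearing denominators turns the sign condition on $R(L)=\xi(L)/\eta(L)$ into \eqref{pcritBC}. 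Finally, the claimed equivalence follows by invoking Theorem \ref{generalintegral} in both directions (sufficiency from the boundary-and-derivative estimate in \eqref{bilinearg}, necessity from the localization arguments there).

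The main obstacle is the Wronskian computation together with careful sign bookkeeping. The delicate point is not the algebra of the cancellation per se but pinning down which direction of monotonicity of $R$ and which sign of $R(L)$ correspond to the definiteness of $K_n$ being claimed, so that one lands on \eqref{pcritDE}--\eqref{pcritBC} with exactly the stated inequality signs rather than their reverses; this is where the constant $1/(\lambda\sinh L\lambda)$ and the orientation built into $H(y)=\int_0^y\eta(z)g(z)\,dz$ in \eqref{bilinearg} must be tracked consistently. A secondary point requiring care is that the ``no isolated zeroes'' hypothesis is what legitimizes dividing by $\eta$ and writing $R'=W/\eta^2$, and one must also confirm that the degenerate endpoint behavior, where the $\sinh$ factors vanish, is compatible with the boundary term $R(L)|H(L)|^2$ so that the endpoint inequality \eqref{pcritBC} is the genuine translation of $R(L)$ having the required sign.
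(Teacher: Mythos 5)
Your proposal is correct and follows essentially the same route as the paper's proof: reduce $K_n$ to the bilinear form of Theorem~\ref{generalintegral} via the positive prefactor $1/(\lambda\sinh L\lambda)$, set $R=\xi/\eta$, and translate $R'(y)\ge 0$ into \eqref{pcritDE} via the Wronskian $\xi'\eta-\xi\eta'$ (whose cross terms indeed cancel by the hyperbolic addition formula, leaving $\lambda\sinh(L\lambda)$ times a local expression in $p,q,q'$) and $R(L)\le 0$ into \eqref{pcritBC} using $\xi(L)=-\lambda q(L)$. The paper's proof states these translations without computation, so your Wronskian details simply fill in what the authors call ``easy to compute,'' with the substitution $q=p'/\alpha^2$ exactly as in the paper.
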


\begin{proof}
Observe that $q(y) = \frac{1}{\alpha^2} p'(y)$ in \eqref{pqdef}. With $R = \xi/\eta$ and $\eta$ and $\xi$ given by \eqref{etadef}--\eqref{xidef}, it is easy to compute that $R'(y)\ge 0$ translates into the differential inequality \eqref{pcritDE}, while $R(L)\le 0$ translates into the boundary condition \eqref{pcritBC}. Thus Theorem \ref{generalintegral} yields the conclusion.
\end{proof}

Note that the condition for sign-definiteness of the curvature is that \eqref{pcritDE} for \emph{all} nonzero integers $n$; so far we have only considered one integer at a time. 

\begin{corollary}\label{nonnegativecurvaturecor}
Suppose $X = f(y)\,\partial_x$ is a steady shear-flow solution to the quasigeostrophic equation. Then
the curvature operator $Y\mapsto R(Y,X)X$ is nonnegative iff
\begin{equation}\label{W2ndorderbest}
\alpha^2 p(y)^2 + 2 \alpha^2 p(y) p''(y) - (6 \alpha^2 + 4) p'(y)^2 \le 0 \\
\end{equation}
for all $y\in [0,L]$, and
\begin{equation}\label{W1storderbest}
\alpha^2 p(L)p'(L) + 2p'(L)^2 \sqrt{\alpha^2+1} \coth{\big(L\sqrt{\alpha^2+1}\big)} \ge 0,
\end{equation}
where $p(y) = \alpha^2 \psi'(y) - \beta$.
\end{corollary}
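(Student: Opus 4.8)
The plan is to reduce the statement about the full curvature operator to the single-mode criterion of Proposition~\ref{curvaturetheorem} and then to locate the binding Fourier index. Nonnegativity of $Y\mapsto R(Y,X)X$ means $\langle R(Y,X)X,Y\rangle\ge 0$ for all $Y$; by the pair symmetry of the curvature tensor this equals $\langle R(X,Y)Y,X\rangle=K(X,Y)$. Using the expansion \eqref{fouriersum}, $K(X,Y)=\sum_{n\in\ZZ}n^2K_n$, and because each $K_n$ depends only on the single Fourier coefficient $g_n$ (the $x$-integration being diagonal), a perturbation concentrated in one mode isolates the corresponding summand. Hence the operator is nonnegative if and only if $K_n\ge0$ for every nonzero $n$, the $n=0$ term being killed by the factor $n^2$. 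Invoking Proposition~\ref{curvaturetheorem} for each such $n$, it remains to show that the family of conditions \eqref{pcritDE}--\eqref{pcritBC} over all $n\ge1$ collapses to the pair \eqref{W2ndorderbest}--\eqref{W1storderbest}.

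First I would treat the differential inequality. With $q=p'/\alpha^2$ as in \eqref{pqdef}, the left side of \eqref{pcritDE} is $\alpha^2p^2+2\alpha^2pp''-(6\alpha^2+4n^2)(p')^2$, whose only $n$-dependence is the coefficient $-(6\alpha^2+4n^2)$ multiplying $(p')^2\ge0$. This coefficient is strictly decreasing in $n^2$, so the expression is pointwise largest at $|n|=1$; consequently \eqref{pcritDE} holds for all nonzero $n$ exactly when it holds at $n=1$, which is \eqref{W2ndorderbest}.

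Next I would reduce the boundary condition. Since $\lambda=\sqrt{\alpha^2+n^2}>0$ and $\sinh(\lambda L)>0$, clearing the positive factor $2\lambda\sinh(\lambda L)$ turns the sign requirement \eqref{pcritBC} into
\[
\alpha^2 p(L)p'(L)+2\lambda\coth(\lambda L)\,p'(L)^2\ge0.
\]
If $p'(L)=0$ this is automatic for every $n$; otherwise dividing by $p'(L)^2$ isolates the $n$-dependence in $2\lambda\coth(\lambda L)$. The elementary inequality $\sinh(2tL)>2tL$ shows that $t\mapsto t\coth(tL)$ is increasing, so this term grows with $n$ and the binding case is again $n=1$, i.e. $\lambda=\sqrt{\alpha^2+1}$, yielding \eqref{W1storderbest}.

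The step I expect to be the main obstacle is the uniform handling of the nondegeneracy hypothesis of Proposition~\ref{curvaturetheorem}, namely that $\eta$ and $\xi$ from \eqref{etadef}--\eqref{xidef} have no isolated zeroes in $(0,L)$; this condition involves $\lambda$ and so a priori varies with $n$. Here I would exploit the analyticity of $p$ and $q$ to force any zeroes to be isolated, and argue, following the singular-point analysis in the proof of Theorem~\ref{generalintegral}, that a zero of $R=\xi/\eta$ for some mode would produce a perturbation making that $K_n$ change sign; the delicate point is verifying that, as with the two inequalities, $n=1$ remains the governing index for this condition rather than some interior mode. Once these reductions are in place, the equivalence stated in the Corollary follows.
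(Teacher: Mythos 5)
Your proposal is correct and takes essentially the same route as the paper: reduce to the per-mode criterion of Proposition~\ref{curvaturetheorem} and identify $n=1$ as the extremal mode, both for \eqref{pcritDE} (the coefficient $-(6\alpha^2+4n^2)$ of $(p')^2\ge 0$ decreases in $n^2$) and for the cleared form $\alpha^2 p(L)p'(L)+2\lambda\coth(\lambda L)\,p'(L)^2\ge 0$ of \eqref{pcritBC} (since $\lambda\coth(\lambda L)$ is minimized at $n=1$, which the paper merely asserts and you verify via $\sinh(2tL)>2tL$). The one point you flag as unresolved---uniform handling over $n$ of the ``no isolated zeroes'' hypothesis on $\eta$ and $\xi$---is silently dropped in the paper's own statement and proof of the corollary, so your attempt is, if anything, more careful than the published argument on exactly the step where both are incomplete.
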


\begin{proof}
The worst-case scenario in \eqref{pcritDE}
is the smallest value of $\lambda=\sqrt{\alpha^2+n^2}$, which is
when $n=1$ (since $n=0$ does not show up in the formula \eqref{fouriersum}). For this $n$ we obtain \eqref{W2ndorderbest}.

On the other hand, the boundary condition from \eqref{pcritBC} can be rewritten as
$$\alpha^2 p(L)p'(L) + 2\lambda \coth{(\lambda L)} \, p'(L)^2 \ge 0,$$
which must be true for all $\lambda = \sqrt{\alpha^2+n^2}$. Since the minimum value of
$\lambda \coth{(\lambda L)}$ is attained when $n=1$, we obtain \eqref{W1storderbest}.
\end{proof}

Now we solve the differential inequality \eqref{W2ndorderbest} subject to the condition \eqref{W1storderbest}. Note that
whether $p$ is positive or negative, the differential inequalities remain the same expressed in terms of $\lvert p\rvert$.

\begin{proposition}
Let $\lvert p(y)\rvert = Z(y)^{-\alpha/(2\alpha^2+2)}$ for some function $Z(y)$. Then the differential inequality \eqref{W2ndorderbest}
becomes $Z''(y) \le  \lambda^2 Z(y)$, while the initial condition \eqref{W1storderbest} becomes
\begin{equation}\label{ZBC}
 -\lambda \sinh{\lambda L} \, Z(L) Z'(L) + \cosh{\lambda L}\, Z'(L)^2\ge 0
\end{equation}
for $\lambda^2 = \alpha^2+1$.
\end{proposition}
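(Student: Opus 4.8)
The plan is to read \eqref{W2ndorderbest} as a nonlinear second-order differential inequality for $p$ and to recognize the prescribed power law $\lvert p\rvert = Z^{m}$ as the substitution that linearizes it. First I would reduce to the case $p>0$: each term appearing in \eqref{W2ndorderbest} and \eqref{W1storderbest}, namely $p^2$, $p\,p''$, $(p')^2$ and $p\,p'$, is invariant under $p\mapsto -p$, so there is no loss of generality, and on any interval where $p\neq 0$ we may write $p=Z^{m}$ with $Z>0$. The reason to expect a linearization is that dividing \eqref{W2ndorderbest} by $p^2$ and using $p''/p=(\ln p)''+\big((\ln p)'\big)^2$ turns it into a Riccati-type inequality in the logarithmic derivative $(\ln p)'$, and the monomial ansatz $p=Z^{m}$ is precisely the classical substitution $(\ln p)'\propto Z'/Z$ that linearizes a Riccati equation.

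Concretely, I would substitute $p=Z^{m}$, with $p'=mZ^{m-1}Z'$ and $p''=m(m-1)Z^{m-2}(Z')^2+mZ^{m-1}Z''$, into \eqref{W2ndorderbest} and collect the result into a $Z^{2m}$ term, a $Z^{2m-1}Z''$ term, and a $Z^{2m-2}(Z')^2$ term. The coefficient of the $(Z')^2$ group is the quadratic $2\alpha^2 m(m-1)-(6\alpha^2+4)m^2$ in $m$, and the entire method rests on the fact that the prescribed exponent is exactly the nonzero root of this quadratic, so that the $(Z')^2$ term disappears. Once it does, only the comparison $\alpha^2 Z^{2m}+2\alpha^2 m\,Z^{2m-1}Z''\le 0$ remains; dividing by the positive quantity $\alpha^2 Z^{2m-1}$ and then by $2m$ reverses the inequality because $m<0$, leaving a purely linear comparison between $Z''$ and a constant multiple of $Z$. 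I would determine both the exponent and this constant from this single cancellation rather than quoting them, since they are forced together.

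For the boundary condition I would substitute directly. Plugging $p(L)=Z(L)^{m}$ and $p'(L)=mZ(L)^{m-1}Z'(L)$ into \eqref{W1storderbest} and factoring out $m\,Z(L)^{2m-2}$, which is negative since $m<0$ and $Z>0$, reverses the inequality and yields $\alpha^2 Z(L)Z'(L)+2m\lambda\coth(\lambda L)\,Z'(L)^2\le 0$. Inserting the value of $m$, multiplying through by $\lambda\sinh(\lambda L)>0$ to clear $\coth(\lambda L)=\cosh(\lambda L)/\sinh(\lambda L)$, and negating then produces \eqref{ZBC} exactly, with $\lambda^2=\alpha^2+1$. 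This computation is mechanical once the exponent is fixed, and the clean cancellation of the $\alpha^2$ factors serves as an independent check on that exponent.

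The main obstacle is bookkeeping rather than conceptual. I expect the two delicate points to be, first, verifying that the quadratic coefficient of the $(Z')^2$ term genuinely vanishes for the prescribed exponent --- this is where the particular combination $6\alpha^2+4$ in \eqref{W2ndorderbest} enters, and an off-by-a-factor slip in the exponent would leave a residual $(Z')^2$ term and spoil the linearization --- and, second, correctly tracking the two inequality reversals caused by dividing by $m<0$ in the differential inequality and by the negative prefactor in the boundary term, since these reversals fix both the direction and the constant in the final linear inequality. Because the exponent, the constant $\lambda^2$, and the transformed boundary condition \eqref{ZBC} all emerge from the same substitution, I would treat the clean appearance of \eqref{ZBC} as the consistency check that validates the algebra.
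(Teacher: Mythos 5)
Your strategy---substitute $\lvert p\rvert = Z^m$, choose $m$ to annihilate the $(Z')^2$ terms, and push the same substitution through the boundary inequality---is certainly the intended one (the paper states this proposition with no proof at all, so correctness of execution is the whole question), and your boundary-condition computation is right. But your two headline claims fail when the algebra is actually carried out. The nonzero root of your quadratic $2\alpha^2 m(m-1)-(6\alpha^2+4)m^2=0$ is $m=-\alpha^2/(2\alpha^2+2)$, \emph{not} the exponent $-\alpha/(2\alpha^2+2)$ prescribed in the statement; they agree only for $\alpha\in\{0,1\}$. With the literal exponent the $(Z')^2$ term does not cancel (the residual coefficient is $2\alpha(1-\alpha)m$), which is precisely the failure mode you yourself warned about; so your assertion that the prescribed exponent ``is exactly the nonzero root'' is false as stated. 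The statement's exponent is a typo in the paper---the text below the proposition writes $\alpha^2\psi'-\beta = Z^{-\alpha^2/(2\lambda^2)}$---and your own consistency check detects it: the boundary computation lands on \eqref{ZBC} exactly only with $m=-\alpha^2/(2\lambda^2)$, while with $m=-\alpha/(2\alpha^2+2)$ the first term of \eqref{ZBC} comes out off by a factor of $\alpha$. A correct writeup has to derive $m$, notice the mismatch, and say which statement is being proved.

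More seriously, your own sign bookkeeping does not produce the claimed inequality $Z''\le\lambda^2 Z$. With your normalization $Z>0$ and $m=-\alpha^2/(2\lambda^2)$, the surviving terms of \eqref{W2ndorderbest} are $\alpha^2 Z^{2m}+2\alpha^2 m\,Z^{2m-1}Z''\le 0$, and dividing by $\alpha^2 Z^{2m-1}>0$ and then by $2m<0$ gives
\begin{equation}
Z'' \;\ge\; -\tfrac{1}{2m}\,Z \;=\; \tfrac{\lambda^2}{\alpha^2}\,Z,
\end{equation}
wrong in both direction and constant. The constant discrepancy traces to a second typo: rederiving \eqref{pcritDE} from $R'\ge 0$ with $R=\xi/\eta$, $q=p'/\alpha^2$ gives first term $\alpha^4 p^2$, not $\alpha^2 p^2$ (test on the critical $p=\sinh(\lambda y)^{-\alpha^2/(2\lambda^2)}$ with, say, $\alpha^2=3$: equality holds with $\alpha^4$, fails with $\alpha^2$); with $\alpha^4$ the transformed inequality is the sign-invariant $Z''/Z\ge\lambda^2$. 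The direction discrepancy is created by your reduction to $Z>0$: $Z''/Z\ge\lambda^2$ reads $Z''\ge\lambda^2 Z$ where $Z>0$ but $Z''\le\lambda^2 Z$ where $Z<0$, and the restriction is not harmless here, since the paper's own critical function $z_0\sinh\big(\lambda(y-y_0)\big)$ changes sign on $(0,L)$ when $y_0\in(0,L)$. So the ``mechanical'' verification you describe cannot succeed as planned: executed honestly it contradicts the stated transformed inequality. A correct proof must either prove the corrected statement (exponent $-\alpha^2/(2\alpha^2+2)$, inequality $Z''/Z\ge\lambda^2$, with \eqref{W2ndorderbest} read with leading term $\alpha^4 p^2$) while flagging the typos, or track both signs of $Z$ explicitly; your proposal does neither.
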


In particular the ``critical'' function $Z(y)$ is given by
$$ Z(y) = z_0 \sinh{\big(\lambda (y-y_0)\big)} $$
for any constants $y_0$ and $z_0$: in this case we get $R(y) = -\dfrac{\cosh{\big(\lambda(L-y_0)\big)}}{\cosh{(\lambda y_0)}}$ which is obviously negative.
This translates into
$$ \alpha^2 \psi'(y) - \beta = Z(y)^{-\tfrac{\alpha^2}{2\lambda^2}}$$
which can be integrated to find $\psi(y)$. Meanwhile condition \eqref{ZBC} becomes
$\cosh{(\lambda y_0)} \ge 0,$ 
which is obviously true.

\section{Concluding remark and future research}

In the case that $\alpha=0$, every function $\psi(y)$ will satisfy the criterion of Corollary \ref{nonnegativecurvaturecor} regardless of $\beta$. On the other hand, for nonzero $\alpha$ the curvature can become positive. Hence we can view the Froude number $\alpha^2$ as stabilizing.

In future research we can perform the same computations in more general rotationally symmetric geometry, e.g., on the $2$-sphere, as in \cite{P2005}. In general, similar techniques should yield relatively simple curvature formulas for incompressible fluids in higher dimensions or with symmetry. For example a similar approach yields curvature results for standard axisymmetric fluids in \cite{WP2017}, and we can try the same for flows with symmetry in more general Riemannian $3$-manifolds (e.g., the Thurston geometries).

\vspace{0.25in}
\begin{acknowledgement}
J. Lee was supported by Gustafsson Foundation, Sweden and S. C. Preston was supported by Simons Foundation, Collaboration Grant for Mathematicians, no. 318969.
\end{acknowledgement}

\vfill

%
%
%

\end{document}